\newtheorem{defi}{Definition}
\newtheorem{rem}[defi]{Remark}
\newtheorem{lemme}[defi]{Lemma}
\newtheorem{prop}[defi]{Proposition}
\title{\bfseries{A geometric description of the homology of surface bundles}}
\author{Caterina Campagnolo}
\date{}
\begin{document}
\maketitle
\begin{center}
\bfseries{Abstract}
\end{center}

In the present note we describe geometrically the homology classes in the total space of a surface bundle over a surface in terms of the holonomy map. We treat the cases where the base surface is closed or has one boundary component. 

\section{Introduction}\label{intro}
In 1986, Thurston proved \cite{thurston tores d'application} that the mapping torus of any pseudo-Anosov diffeomorphism of a hyperbolic surface is a hyperbolic $3$-manifold. Such a mapping torus has naturally the structure of a surface bundle over the circle $S^1$. From the celebrated theorem of Agol \cite{agol}, it is known that every closed oriented real hyperbolic $3$-manifold has a finite degree cover of that form.

In contrast, the question whether a surface bundle over a surface admits a real hyperbolic structure, when fibre and base are hyperbolic, is open in its full generality. What is more, even the stronger question whether there exists a word hyperbolic surface-by-surface group is still open (see the introduction of the article by Bowditch \cite{bowditch}). It is generally conjectured that it is not the case, even for the stronger question. Only the case of complex hyperbolic structures was answered by Kapovich \cite{kapovich} in the negative.

To get a better understanding of surface bundles over surfaces, we study their homology groups. While these are known abstractly, for example through the Serre spectral sequence of fibrations, we try here to give an as geometric as possible description of the homology classes, aiming at submanifolds of the total space. Indeed, the presence of an essential torus in $E$ would imply that $E$ is not negatively curved. Unfortunately, the homology does not give enough information in general to allow to prove the existence of such a torus.

In order to find this geometric description of homology, we use the Mayer-Vietoris sequence. We start with the case where the base has one boundary component, and building on it we get the case of the closed base. We restrict our attention to hyperbolic fibre.

In Section \ref{notations} we fix the notation and recall some facts on surface bundles. We compute their homology groups in Section \ref{calcul} and explain why we cannot conclude on the existence of a toroidal class.

\vspace{0.1cm}
\emph{Acknowledgements.}
This research was supported by Swiss National Science Foundation  grant  number PP00P2-128309/1. The author would like to
thank her doctoral advisor Michelle Bucher for many useful discussions at each and every stage of this work, and Mate Juhasz for helpful comments on Propositions \ref{surface a bord} and \ref{homologie fibre surface}. Many thanks to Dieter Kotschick for carefully reading a first version of this paper and suggesting an improvement of the strategy of the proof. The author is also grateful to Dan Margalit, Saul Schleimer and Richard Webb for pointing out a misformulation in the previous version of Lemma \ref{genre 2 homotope homologue} and to Thomas Koberda for critically inspecting her arguments and proposing insightful counter-examples to the previous version of this paper. Finally the author thanks Benson Farb and Nick Salter for bringing to her attention relevant references.

\section{Background}\label{notations}
An oriented surface bundle with hyperbolic fibre over any Hausdorff paracompact manifold $F\hookrightarrow E \stackrel{\pi}{\rightarrow} M$ is determined up to homeomorphism by the conjugacy class of its holonomy morphism $\pi_1(M)\rightarrow \mathrm{Mod}(F)$ (see for example Proposition 4.6 in \cite{morita}).

Here the base space $M$ will either be a closed orientable surface $\Sigma_g$ of genus g, or an orientable surface of genus g with one boundary component $\Sigma_{g, 1}$. Let us denote by 
$$\pi_1(\Sigma_g)=\left\langle a_1, ..., a_{2g}\,\vline\, \prod_{i=1}^{g}\left[a_{2i-1}, a_{2i}\right]\right\rangle,$$
where $[a_k, a_l]=a_ka_la_k^{-1}a_l^{-1}$ denotes the commutator,
$$\pi_1(\Sigma_{g, 1})=\left\langle b_1, ..., b_{2g}\right\rangle=F_{2g}$$
their fundamental groups, and let
$$f:\pi_1(\Sigma_g)\longrightarrow \mathrm{Mod}(F),\, h:\pi_1(\Sigma_{g, 1})\longrightarrow \mathrm{Mod}(F)$$
be the respective holonomy morphisms. Let us further write $f_i$ for $f(a_i)$ and $h_i$ for $h(b_i)$. The induced morphisms ${f_i}_*, {h_i}_*: H_1(F, \mathbb{Z})\rightarrow H_1(F, \mathbb{Z})$ preserve the intersection form. Hence, after fixing a basis of $H_1(F, \mathbb{Z})$, for example as in Figure \ref{fig:generateurs homologie surface}, we can see the ${f_i}_*$'s  and the ${h_i}_*$'s in the integral symplectic group $Sp(2g(F), \mathbb{Z})$. Let $\rho$ denote this canonical representation $\mathrm{Mod}(F)\rightarrow Sp(2g(F),\mathbb{Z})$.
\begin{figure}[htbp]
\centering
\includegraphics[scale=0.08]{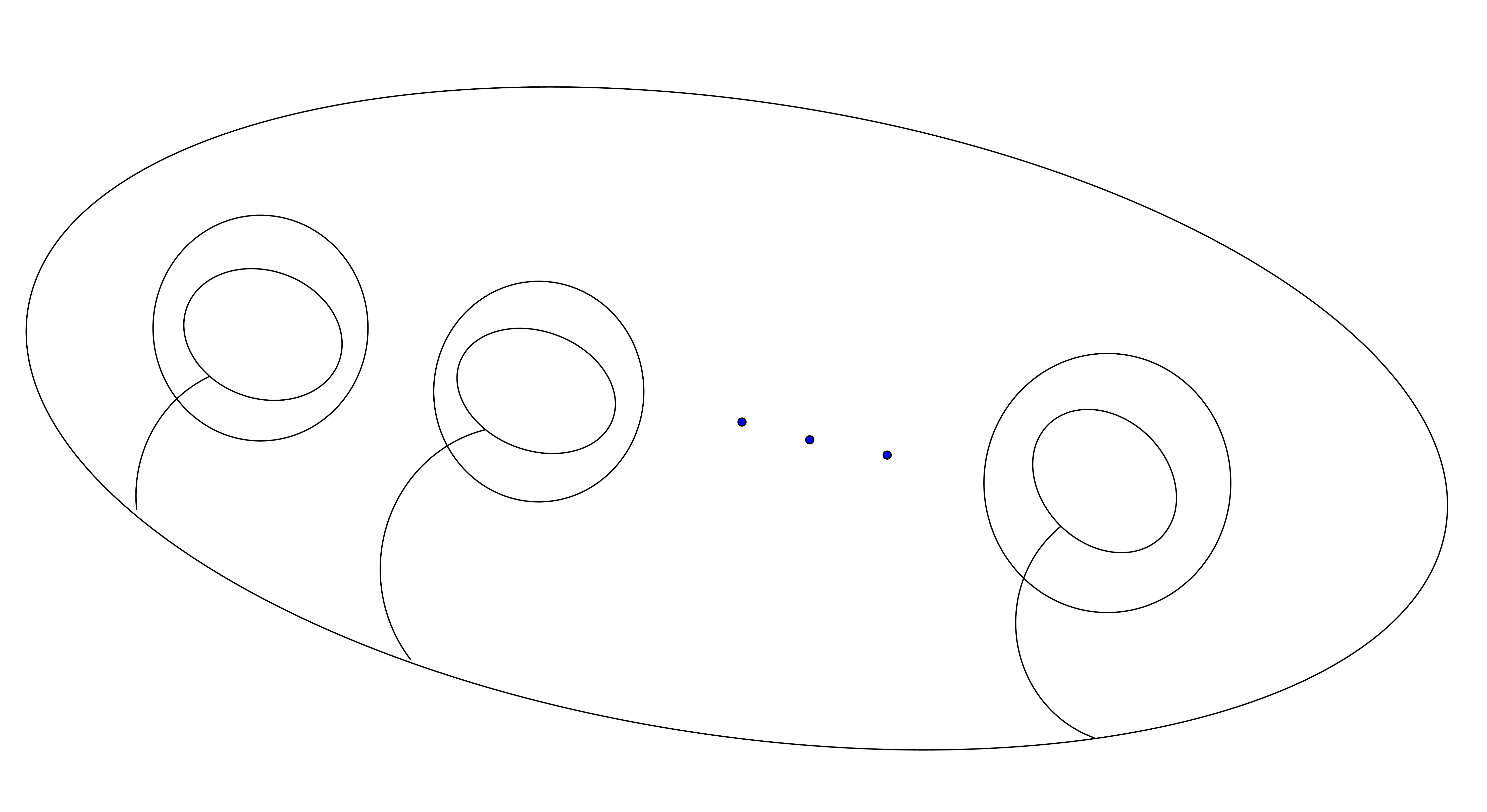}
     \caption{An integral basis for $H_1(F, \mathbb{R})$.}
     \label{fig:generateurs homologie surface}
\end{figure}

An oriented surface bundle over a surface admits a so-called tangent bundle along the fibre $T\pi$, that is the set of all tangent vectors to $E$ that are tangent to the fibres,
$$T\pi=\left\{v\in TE\,\mid \,\pi_*(v)=0\right\}.$$
The Euler class $e\in H^2(E,\mathbb{Z})$ of the bundle $E$ is defined as the Euler class of the vector bundle $T\pi$. We will denote its Poincar\'e dual by $[N]\in H_2(E, \mathbb{Z})$.

\section{Computation of the homology groups}\label{calcul}
In this section we compute the homology groups of a surface bundle over a surface in terms of the holonomy map. Though obtained by a different method, our description of the homology is an explicit geometric expression of what was computed by Morita \cite{morita spectral} for rational homology and by Cavicchioli, Hegenbarth and Repov$\mathrm{\check{s}}$ \cite{cavicchioli-hegenbarth-repovs} for integral homology, using the spectral sequence of the bundle. Note also the work of Salter \cite{salter}, who provides a detailed description of submanifolds representing homology classes in degree $2$ and $3$, focusing on bundles with holonomy lying in the Torelli subgroup. The submanifolds he describes also appear in our computations.

We start with the case where the base space is $\Sigma_{g, 1}$.

\begin{prop}\label{surface a bord}
A surface bundle $F\hookrightarrow A\rightarrow \Sigma_{g, 1}$ over the surface of genus $g$ with one boundary component is a connected $4$-manifold with boundary and has homology
$$\begin{array}{ccl}
H_0(A)&=&\mathbb{R},\\
H_1(A)&=&H_1(F)/\left\langle\beta-{h_i}_*(\beta)\,\vline\, \beta\in H_1(F), 1\leq i\leq 2g\right\rangle\oplus\bigoplus_{i=1}^{2g}H_1(S^1)\\
H_2(A)&=&H_2(F)\oplus \left\langle \left[\sum_{i=1}^{2g}I_i\times\tilde{\alpha}_i\right]\,\vline\, \alpha_i\in H_1(F), \sum_{i=1}^{2g}\alpha_i=\sum_{i=1}^{2g}{h_i}_*(\alpha_i)\right\rangle,\\
H_3(A)&=&\mathbb{R}^{2g},\\
H_4(A)&=&\{0\},
\end{array}$$
where $[\tilde{\alpha}_i]=\alpha_i\in H_1(F)$ and the index i indicates over which generator of $\pi_1(\Sigma_{g, 1})$ the cylinder $I_i\times\tilde{\alpha}_i$ lies.  When not specified, the coefficients of the homology groups are $\mathbb{R}$.
\end{prop}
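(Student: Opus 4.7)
The strategy announced in the introduction is Mayer--Vietoris, and it adapts cleanly because $\Sigma_{g,1}$ deformation retracts onto its $1$-skeleton $\bigvee_{i=1}^{2g}S^1$. I would first replace $A$ by the homotopy equivalent model $Y$ obtained from the central fibre $F$ by attaching $2g$ cylinders $F\times[0,1]$, the left end of the $i$-th cylinder glued to $F$ by the identity and the right end by $h_i$; equivalently, $Y$ is the union of $2g$ mapping tori $M_{h_i}$ identified along a common $F$. That $A$ is a connected $4$-manifold with boundary is immediate from the bundle structure and only needs a sentence.

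\textbf{Mayer--Vietoris setup.} Let $U$ be an open thickening of the central fibre and $V=\bigsqcup_{i=1}^{2g}V_i$ a disjoint thickening of the midlines of the $2g$ cylinders. Then $U\simeq F$, $V\simeq\bigsqcup_{i=1}^{2g}F$, and $U\cap V\simeq\bigsqcup_{i=1}^{2g}(F\sqcup F)$, with one copy of $F$ at each end of each cylinder. Chasing the two inclusions, the map
$$
\varphi_k\colon\bigoplus_{i=1}^{2g}\bigl(H_k(F)\oplus H_k(F)\bigr)\longrightarrow H_k(F)\oplus\bigoplus_{i=1}^{2g}H_k(F)
$$
sends $(\alpha_i,\beta_i)_i$ to $\bigl(\sum_i\alpha_i+\sum_i(h_i)_*\beta_i,\,(\alpha_i+\beta_i)_i\bigr)$: the twist by $(h_i)_*$ on the right end of each cylinder encodes the only nontrivial topology of the bundle. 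All coefficients are real, so the long exact sequence splits in every degree into $0\to\operatorname{coker}\varphi_{k+1}\to H_k(Y)\to\ker\varphi_k\to 0$.

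\textbf{Degree-by-degree reading.} Since $H_k(F)=0$ for $k\geq 3$, the degrees $k=3,4$ are immediate: $H_4(Y)=0$ and $H_3(Y)=\ker\varphi_2$, which consists of tuples $(\alpha_i,-\alpha_i)$ and is $\mathbb{R}^{2g}$. In degree $2$, the cokernel $\operatorname{coker}\varphi_2$ collapses to $H_2(F)$ via the projection $(x,(y_i))\mapsto x-\sum y_i$, and the kernel of $\varphi_1$ works out to $\{(\alpha_i)_i:\sum_i\alpha_i=\sum_i(h_i)_*\alpha_i\}$, giving the two summands of $H_2(A)$. The same projection identifies $\operatorname{coker}\varphi_1$ with $H_1(F)/\langle\beta-(h_i)_*\beta\rangle$, and the tail of the sequence contributes $\ker(H_0(U\cap V)\to H_0(U)\oplus H_0(V))=\mathbb{R}^{2g}$, realised geometrically as lifts of the base loops $b_i$ (hence the $\bigoplus_i H_1(S^1)$ summand of $H_1(A)$).

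\textbf{Main obstacle.} The Mayer--Vietoris algebra is routine; the substantive point is matching the abstract $\ker\varphi_1$ with the explicit cylinders $\sum_i I_i\times\tilde\alpha_i$ demanded by the proposition. To do this I would represent a $2$-cycle of $Y$ as $z=u+v$ with $v=\sum_i I_i\times\tilde\alpha_i\subset V$, and observe that the boundary of $v$, pushed into $U\simeq F$ across the gluings, equals $\sum_i((h_i)_*\alpha_i-\alpha_i)$; requiring this to bound in $H_1(F)$ is precisely the closing-up condition $\sum\alpha_i=\sum(h_i)_*\alpha_i$, and any null-homology in $F$ furnishes the filling $u$. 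Organising the orientations consistently and checking that this geometric construction is a section of the surjection $H_2(Y)\twoheadrightarrow\ker\varphi_1$ is the step requiring the most care; once this is in place the proposition follows directly from the splittings.
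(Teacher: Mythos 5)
Your proposal is correct and follows essentially the same route as the paper: a Mayer--Vietoris decomposition of the bundle over the wedge of $2g$ circles into a central-fibre piece and $2g$ cylinder pieces, with the holonomy twist $({h_i})_*$ appearing on one end of each cylinder, and the identifications of $\ker\varphi_k$ and $\operatorname{coker}\varphi_k$ you describe match the paper's computations of $\ker(i_k)$ and $\operatorname{coker}(i_k)$ degree by degree (the only cosmetic difference being that the paper places the twist in the inclusion into the cylinders rather than into the central fibre). Your final paragraph on realising $\ker\varphi_1$ by the explicit cycles $\sum_i I_i\times\tilde{\alpha}_i$ is in fact slightly more careful than the paper, which treats that identification informally.
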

\begin{rem}
The convention for the notation $I_i\times\tilde{\alpha}_i$ is that it denotes the oriented cylinder over the i-th generator of $\pi_1(\Sigma_{g, 1})$, where $I_i$ is the oriented interval, and the cylinder has base the homology class $[\tilde{\alpha}_i]$ at the first end of $I_i$ and ${h_i}_*([\tilde{\alpha}_i])$ at the second end of $I_i$.
\end{rem}
\begin{proof}[Proof]
The surface of genus $g$ with one boundary component is homotopy equivalent to the wedge sum of $2g$ circles. So the bundle $A$ is homeomorphic to an $F$-bundle over the wedge sum of $2g$ circles and hence has the same homology groups. We now consider that bundle.

We use the Mayer-Vietoris sequence. Let $P$ be the restriction of the bundle over the union of the $2g$ open intervals forming the petals of the wedge sum and $C$ be the restriction of the bundle over an open neighbourhood of the center $x_0$ of the wedge sum. The intersection $P\cap C$ has $4g$ connected components, which are $F$-bundles over small intervals. We obtain the following long exact sequence:
$$\begin{array}{cccccccc}
0&=&H_3(P)\oplus H_3(C)&\stackrel{j_3}{\longrightarrow} & H_3(A)&\stackrel{\partial _3}{\longrightarrow} &H_2(P\cap C)&\stackrel{i_2}{\longrightarrow} \\
  && H_2(P)\oplus H_2(C)&\stackrel{j_2}{\longrightarrow}  &H_2(A)&\stackrel{\partial _2}{\longrightarrow} & H_1(P\cap C)&\stackrel{i_1}{\longrightarrow}  \\
 && H_1(P)\oplus H_1(C)&\stackrel{j_1}{\longrightarrow} &H_1(A)&\stackrel{\partial _1}{\longrightarrow} & H_0(P\cap C)& \stackrel{i_0}{\longrightarrow} \\
 && H_0(P)\oplus H_0(C)&\stackrel{j_0}{\longrightarrow}  & H_0(A)&\longrightarrow &0.
\end{array}$$
The inclusions $i_P, i_C, j_P, j_C$ making the following diagram commute are given below:
$$
\xymatrix{
&\bigcup_{i=1}^{2g}\{x_i\}\times F\cup\{y_i\}\times F \simeq P\cap C   \ar[dr]^{i_C} \ar[dl]_{i_P}  &\\
P=\bigcup_{i=1}^{2g}F\times [x_i, y_i]  \ar[dr]_{j_P} &&C    \ar[dl]^{j_C}  \\
&A&
}
$$
The map $i_P$ is the identity on the $x_i$-component, and is $h_i$ on the $y_i$-component. The maps $i_C, j_P$ and $j_C$ are the canonical inclusions.

We know that
$$H_2(P\cap C)=\mathbb{R}^{4g},\, H_2(P)\oplus H_2(C)=\mathbb{R}^{2g+1},$$
$$H_1(P\cap C)=H_1(F)^{4g}, \,H_1(P)\oplus H_1(C)=H_1(F)^{2g+1},$$ and $$H_0(P\cap C)=\mathbb{R}^{4g},\, H_0(P)\oplus H_0(C)=\mathbb{R}^{2g+1}, \,H_0(A)=\mathbb{R}.$$
We compute $H_1(A)$:
$$\begin{array}{ccl}
H_1(A)&=&Im(j_1)\oplus H_1(A)/Im(j_1)=Im(j_1)\oplus H_1(A)/Ker(\partial _1)\\
	&\cong &Im(j_1)\oplus Im(\partial _1)=Im(j_1)\oplus Ker(i _0).
	\end{array}$$
The map $i_0$ is as follows:
$$\begin{array}{rrcl}
i_0: & H_0(P\cap C)& \longrightarrow & H_0(P)\oplus H_0(C)\\
	& (\tilde{x}_1, \tilde{y}_1, ..., \tilde{x}_{2g}, \tilde{y}_{2g})&\longmapsto & \left(\left(\tilde{x}_1+{h_1}_*(\tilde{y}_1), ..., \tilde{x}_{2g}+{h_{2g}}_*(\tilde{y}_{2g})\right), -\sum_{i=1}^{2g}\tilde{x}_i+\tilde{y}_i\right),
\end{array}$$
where $\tilde{x_i}, \tilde{y_i}$ denote lifts in $F$ of the points $x_i, y_i$.

The homology classes of the points $\tilde{x}_i$ and ${h_i}_*(\tilde{x}_i)$ are equal in $H_0(P)$, and similarly for $\tilde{y}_i$ and ${h_i}_*(\tilde{y}_i)$, as the fibre is connected. Hence the kernel of $i_0$ is:
$$Ker(i_0)=\left\langle (\tilde{x}_1, -\tilde{x}_1, 0, ..., 0), ..., (0, ..., 0, \tilde{x}_{2g}, -\tilde{x}_{2g})\right\rangle.$$
It is therefore isomorphic to $\mathbb{R}^{2g}$. Each of its generators corresponds in $H_1(A)$ to the circles over each circle of the wedge sum.

As $Im(j_1)\cong \left(H_1(P)\oplus H_1(C)\right)/Ker(j_1)=\left(H_1(P)\oplus H_1(C)\right)/Im(i_1)$, we investigate $i_1$:
$$\begin{array}{rrcl}
i_1: & H_1(P\cap C)& \longrightarrow & H_1(P)\oplus H_1(C)\\
	& (\alpha_1, \beta_1, ..., \alpha_{2g}, \beta_{2g})&\longmapsto & \left(\left(\alpha_1+{h_1}_*(\beta_1), ..., \alpha_{2g}+{h_{2g}}_*(\beta_{2g})\right), -\sum_{i=1}^{2g}\alpha_i+\beta_i\right)
\end{array}$$
Thus $Im(i_1)$ is equal to
$$\left\langle \left((0, ..., 0, \alpha_i , 0, ..., 0), -\alpha_i\right), \left((0, ..., 0, {h_i}_*(\beta_i) , 0, ..., 0), -\beta_i\right), \alpha_i, \beta_i\in H_1(F), 1\leq i\leq 2g\right\rangle.$$
In $(H_1(P)\oplus H_1(C))/ Im(i_1)$, we then have the following identifications:
$$((\gamma_1, ..., \gamma_{2g}), \beta)\sim ((0, ..., 0), \beta+\gamma_1+...+\gamma_{2g})\mbox{ and }$$
$$((0, ..., 0), \beta)\sim ((0, ..., {h_i}_*(\beta), 0, ..., 0), 0)\sim ((0, ..., 0), {h_i}_*(\beta)),\, 1\leq i\leq 2g.$$
This implies
$$\left(H_1(P)\oplus H_1(C)\right)/ Im(i_1)\cong H_1(F)/\left\langle\beta-{h_i}_*(\beta)\mid\beta\in H_1(F), 1\leq i\leq 2g\right\rangle,$$
and hence
$$H_1(A)\cong H_1(F)/\left\langle\beta-{h_i}_*(\beta)\mid \beta\in H_1(F), 1\leq i\leq 2g\right\rangle\oplus \bigoplus_{i=1}^{2g}H_1(S^1).$$
We compute $H_2(A)$:
$$\begin{array}{ccl}
H_2(A)&=&Im(j_2)\oplus H_2(A)/Im(j_2)=Im(j_2)\oplus H_2(A)/Ker(\partial _2)\\
	&\cong& Im(j_2)\oplus Im(\partial _2)=Im(j_2)\oplus Ker(i _1).
	\end{array}$$
The previous description of $i_1$ allows us to see that $Ker(i_1)$ is
$$\left\{(\alpha_1, \beta_1, ..., \alpha_{2g}, \beta_{2g})\mid \alpha_i=-{h_i}_*(\beta_i), 1\leq i\leq {2g},  \sum_{i=1}^{2g}\beta_i\stackrel{(*)}{=}\sum_{i=1}^{2g}{h_i}_*(\beta_i)\right\}.$$
Roughly speaking, it corresponds in $H_2(A)$ to the cylinders over the circles of the wedge sum that glue back, either to theirselves, or to a cylinder coming from another circle, possibly using part of the fibre in $C$ to match together.

As $Im(j_2)\cong \left(H_2(P)\oplus H_2(C)\right)/Ker(j_2)=\left(H_2(P)\oplus H_2(C)\right)/Im(i_2)$, we consider $i_2$:
$$\begin{array}{rrcl}
i_2: & H_2(P\cap C)& \longrightarrow & H_2(P)\oplus H_2(C)\\
	& \left(\lambda_1[F], ..., \lambda_{4g}[F]\right)&\longmapsto & \left(\left(\lambda_1[F]+\lambda_2{h_1}_*[F], ..., \lambda_{4g-1}[F]+\lambda_{4g}{h_{2g}}_*[F]\right), -\sum_{i=1}^{4g}\lambda_i[F]\right)
\end{array}$$
As ${h_i}_*([F])=[F]$ for all $1\leq i\leq 2g$, we see that 
$$Im(i_2)=\left\langle \left(([F], 0, ..., 0), -[F]\right), ..., \left((0, ..., 0, [F]), -[F]\right)\right\rangle.$$
Therefore, in $\left(H_2(P)\oplus H_2(C)\right)/Im(i_2)$, 
$$\left((0, ..., 0, \lambda [F], 0, ..., 0), \mu [F]\right)\sim \left((0, ..., 0), (\lambda +\mu)[F]\right),$$
and hence $\left(H_2(P)\oplus H_2(C)\right)/Im(i_2)\cong H_2(F)=\mathbb{R}.$
So we obtained that 
$$H_2(A)\cong H_2(F)\oplus \left\langle \left[\sum_{i=1}^{2g} I_i\times\tilde{\alpha}_i\right]\vline\alpha_i\in H_1(F), \sum_{i=1}^{2g}\alpha_i=\sum_{i=1}^{2g}{h_i}_*(\alpha_i)\right\rangle,$$
where the index $i$ indicates over which circle of the wedge sum the cylinder lies, and $[\tilde{\alpha}_i]=\alpha_i$.

Finally we compute $H_3(A)$.
This is easy, since $H_3(A)\cong Im(\partial_3)=Ker(i_2)$. By the above description of $i_2$ we compute:
$$Ker(i_2)=\left\langle\left([F], -[F], 0, ..., 0\right), ..., \left(0, ..., 0, [F], -[F]\right)\right\rangle.$$
It is therefore isomorphic to $\mathbb{R}^{2g}$. In $H_3(A)$, it corresponds to the $3$-manifolds generated by the fibre going around each circle of the wedge sum.
\end{proof}

We can use Proposition \ref{surface a bord} to treat the case where the base space is a closed surface.

\begin{prop}\label{homologie fibre surface}
Let $F\hookrightarrow E \rightarrow \Sigma_g$ be a surface bundle over the surface $\Sigma_g$ with hyperbolic fibre $F$. Then the total space $E$ is a connected $4$-dimensional manifold with homology
$$\begin{array}{ccl}
H_0(E)&=&\mathbb{R},\\
H_1(E)&=&H_1(F)/\left\langle\beta-{f_i}_*(\beta)\,\vline\, \beta\in H_1(F), 1\leq i\leq 2g\right\rangle\oplus\bigoplus_{i=1}^{2g}H_1(S^1),\\
H_2(E)&=&\left\langle [N]\right\rangle\oplus H_2(F)\oplus\\
	& &\left\langle \left[\sum_{i=1}^{2g}I_i\times\tilde{\alpha}_i\right]\,\vline\, \alpha_i\in H_1(F), \sum_{i=1}^{2g}\alpha_i=\sum_{i=1}^{2g}{f_i}_*(\alpha_i)\right\rangle/i_2\left(H_1(F)\otimes H_1(S^1)\right),\\
H_3(E)&=&\mathbb{R}^{2g}\oplus \left\langle\alpha\otimes [S^1]\, \vline\, \alpha\in H_1(F), {f_i}_*(\alpha)=\alpha, \forall 1\leq i\leq 2g\right\rangle,\\
H_4(E)&=&\mathbb{R}, 
\end{array}$$
where $[\tilde{\alpha}_i]=\alpha_i\in H_1(F)$ and i indicates over which generator of $\pi_1(\Sigma_g)$ the cylinder $I_i\times\tilde{\alpha}_i$ lies. When not specified, the coefficients of the homology groups are $\mathbb{R}$.
\end{prop}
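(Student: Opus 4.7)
The plan is to apply Mayer--Vietoris to the decomposition $\Sigma_g = \Sigma_{g,1} \cup D^2$, where the disk is glued along $\partial \Sigma_{g,1}$. This lifts to a decomposition $E = A \cup V$, with $A$ the restriction of $E$ over an open neighbourhood of $\Sigma_{g,1}$ (so $H_*(A)$ is given by Proposition \ref{surface a bord}) and $V$ the restriction over an open disk, so $V \simeq F$. Their intersection $A \cap V$ is the restriction of $E$ over an annulus neighbourhood of $\partial \Sigma_{g,1}$, with holonomy around the core circle equal to $f_*\bigl(\prod_{i=1}^{g}[a_{2i-1},a_{2i}]\bigr)$. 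Since $\prod_{i=1}^{g}[a_{2i-1},a_{2i}] = 1$ in $\pi_1(\Sigma_g)$, this holonomy is the identity in $\mathrm{Mod}(F)$, so $A \cap V$ is homotopy equivalent to $S^1 \times F$, whose homology I compute by K\"unneth.

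With the pieces in place, the main task is to identify the maps in the long exact sequence
\[
\cdots \to H_n(S^1 \times F) \stackrel{\phi_n}{\longrightarrow} H_n(A) \oplus H_n(F) \longrightarrow H_n(E) \longrightarrow H_{n-1}(S^1\times F) \to \cdots
\]
and read off $H_n(E)$ as an extension of $\ker(\phi_{n-1})$ by $\mathrm{coker}(\phi_n)$. The inclusions send fibre classes to fibre classes and the core circle of the annulus to the boundary loop $\prod[a_{2i-1},a_{2i}]$; since this vanishes in $H_1$ of the wedge of circles underlying $\Sigma_{g,1}$, both $\phi_3$ and the $[S^1]$-part of $\phi_1$ vanish, which yields $H_4(E) = \mathbb{R}$ and provides a new generator of $H_2(E)$ lifting $[S^1] \in H_1(A \cap V)$ --- this extra generator I identify with the Poincar\'e dual $[N]$ of the Euler class by naturality. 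With these observations $H_0$, $H_1$, and $H_4$ drop out directly; in particular the relations on $H_1(E)$ coming from $V$ simply repeat those already imposed in $A$.

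For $H_2(E)$, the cokernel of $\phi_2$ contributes the fibre class $H_2(F)$ (the two copies of $[F]$ from $A$ and $V$ are identified modulo the antidiagonal image of $[F] \in H_2(S^1 \times F)$) together with the cylinders of Proposition \ref{surface a bord} quotiented by the image $i_2(H_1(F) \otimes H_1(S^1))$ of the cylinders over the boundary loop; adjoining the $[N]$-summand from $\ker(\phi_1)$ yields the stated formula. For $H_3(E)$, the injection $H_3(A) \hookrightarrow H_3(E)$ (valid because $\phi_3 = 0$) gives the $\mathbb{R}^{2g}$ summand, and the complement is $\ker(\phi_2) \cap (H_1(F) \otimes H_1(S^1))$, namely those $\alpha \otimes [S^1]$ whose image $i_{A*}(\alpha \otimes [S^1])$, the cylinder over $\partial \Sigma_{g,1}$ with fibre class $\alpha$, vanishes in $H_2(A)$.

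The main obstacle is to verify that this last condition is equivalent to holonomy-invariance of $\alpha$, i.e.\ $f_{i*}(\alpha) = \alpha$ for every $i$. When $\alpha$ is invariant, the $3$-chain $\alpha \times \Sigma_{g,1} \subset A$ is well-defined and has boundary the relevant cylinder, giving an explicit null-homology. The converse requires expanding $\prod[a_{2i-1},a_{2i}]$ as a word in the $a_i$, tracking how the holonomies twist $\alpha$ along each letter, and matching the resulting combination of cylinders $I_i \times \tilde{\alpha}_i$ against the cylinder summand of $H_2(A)$ from Proposition \ref{surface a bord}; vanishing of this combination should force each commutator contribution to collapse separately, forcing $\alpha$ to be fixed by every $f_{i*}$.
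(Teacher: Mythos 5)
Your proposal follows essentially the same route as the paper: the same decomposition of $E$ over $\Sigma_{g,1}\cup D^2$, the same Mayer--Vietoris analysis of each map, and the same identification of the extra $H_2$-generator with $[N]$ via the restriction of the Euler class to the trivial bundle over the annulus (where hyperbolicity of $F$ enters through $\chi(F)\neq 0$). The one step you leave at the level of ``should force'' --- that vanishing in $H_2(A)$ of the cylinder over the boundary word $\prod_{i=1}^{g}[a_{2i-1},a_{2i}]$ forces $f_{i*}(\alpha)=\alpha$ for all $i$ --- is precisely the computation the paper carries out explicitly, by grouping the $4g$ signed cylinder summands according to the generator they lie over, setting each group to zero, and deducing invariance successively; it works out exactly as you predict.
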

\begin{rem}
The convention for the notation $I_i\times\tilde{\alpha}_i$ is as above:  it denotes the oriented cylinder over the i-th generator of $\pi_1(\Sigma_g)$, where $I_i$ is the oriented interval, and the cylinder has base the homology class $[\tilde{\alpha}_i]$ at the first end of $I_i$ and ${f_i}_*([\tilde{\alpha}_i])$ at the second end of $I_i$.
\end{rem}

\begin{proof}
Let $\delta$ in $\Sigma_g$ be a simple closed curve bounding a disc $D$ in $\Sigma$. Let $\Sigma_A$ denote the surface obtained from $\Sigma_g$ by cutting out $D$: its boundary is $\delta$. Let $A$ be the fibre bundle $E$ restricted to $\Sigma_A$ and $B$ the restriction of $E$ to $D$. As $D$ is contractible, the bundle $B$ is isomorphic to the trivial one. The surface with boundary $\Sigma_A$ is homotopy equivalent to the wedge sum of $2g$ circles, hence $A$ is homeomorphic to a surface bundle over the wedge sum of $2g$ circles. The restriction of $E$ to the intersection of $\Sigma_A$ and $D$ is a surface bundle over a circle. As it is the boundary of $B$, which is trivial, it is the trivial bundle as well.
By Proposition \ref{surface a bord} and the preceding remarks, we have

\begin{center}
\begin{tabular}{c|c|}
$H_*$ & $A$ \\
\hline
$0$ & $\mathbb{R}$  \\
$1$ & $H_1(F)/\left\langle\beta-{f_i}_*(\beta)\,\vline\, \beta\in H_1(F), 1\leq i\leq 2g\right\rangle\oplus\bigoplus_{i=1}^{2g}H_1(S^1)$ \\
$2$ & $H_2(F)\oplus \left\langle \left[\sum_{i=1}^{2g}I_i\times\tilde{\alpha}_i\right]\,\vline\, \alpha_i\in H_1(F), \sum_{i=1}^{2g}\alpha_i=\sum_{i=1}^{2g}{f_i}_*(\alpha_i)\right\rangle$  \\
$3$ & $\mathbb{R}^{2g}$ \\
$4$ & $\{0\}$
\end{tabular}
\begin{tabular}{c|c|c|}
$H_*$& $B$ & $A\cap B$\\
\hline
$0$&$\mathbb{R}$ & $\mathbb{R}$\\
$1$& $ H_1(F)$& $H_1(S^1)\oplus H_1(F)$\\
$2$&  $H_2(F)$&$H_2(F)\oplus H_1(S^1)\otimes H_1(F)$\\
$3$ & $\{0\}$ & $\mathbb{R}$\\
$4$ & $\{0\}$ & $\{0\}$
\end{tabular}\end{center}
We apply the Mayer-Vietoris sequence to this decomposition of $E$:
$$\begin{array}{cccccccc}
0&=&H_4(A)\oplus H_4(B)&\stackrel{j_4}{\longrightarrow} & H_4(E)&\stackrel{\partial _4}{\longrightarrow} &H_3(A\cap B)&\stackrel{i_3}{\longrightarrow} \\
  && H_3(A)\oplus H_3(B)&\stackrel{j_3}{\longrightarrow}  &H_3(E)&\stackrel{\partial _3}{\longrightarrow} & H_2(A\cap B)&\stackrel{i_2}{\longrightarrow}  \\
 && H_2(A)\oplus H_2(B)&\stackrel{j_2}{\longrightarrow} &H_2(E)&\stackrel{\partial _2}{\longrightarrow} & H_1(A\cap B)& \stackrel{i_1}{\longrightarrow} \\
 && H_1(A)\oplus H_1(B)&\stackrel{j_1}{\longrightarrow}  & H_1(E)&\stackrel{\partial_1}{\longrightarrow} & H_0(A\cap B)& \stackrel{i_0}{\longrightarrow} \\
 && H_0(A)\oplus H_0(B)&\stackrel{j_0}{\longrightarrow}  & H_0(E)&\longrightarrow &0.
\end{array}$$
As $H_0(A), H_0(B)$ and $H_0(E)$ are $1$-dimensional, we see that $i_0$ is injective, hence $\partial _1$ is the zero map, and so $j_1$ is surjective. This means that $H_1(E)=Im(j_1)\cong (H_1(A)\oplus H_1(B))/ Ker(j_1)$. But $Ker(j_1)=Im(i_1)$, so we consider $i_1$:
$$\begin{array}{cccc}
i_1:&H_1(A\cap B)&\longrightarrow & H_1(A)\oplus H_1(B)\\
	&\lambda\sigma+\phi & \longmapsto & (\overline{\phi}, -\phi)
\end{array}$$
because the image of the generator $\sigma$ of $H_1(S^1)$ is trivial in $H_1(A)$ as it goes over all circles of the wedge sum in $A$ once in each direction, and in $H_1(B)$ as it is contractible over the disc $D$. By $\overline{\phi}$ we mean the class of $\phi\in H_1(F)$ in $H_1(F)/\left\langle\beta-{f_i}_*(\beta)\,\vline\, \beta\in H_1(F), 1\leq i\leq 2g\right\rangle$.

So in $(H_1(A)\oplus H_1(B))/Im(i_1)$ we have 
$$\left(\sum_{i=1}^{2g}\lambda_i[S^1_i]+\overline{\phi}, \psi\right)\sim\left(\sum_{i=1}^{2g}\lambda_i[S^1_i]+\overline{\phi}+\overline{\psi}, 0\right),$$
hence $(H_1(A)\oplus H_1(B))/Im(i_1)\cong H_1(A)$. So $H_1(E)\cong H_1(A)$.

We turn now to $H_2(E)$:
$$\begin{array}{ccl}
H_2(E)&=&Im(j_2)\oplus H_2(E)/Im(j_2)=Im(j_2)\oplus H_2(E)/Ker(\partial_2)\\
	&\cong &Im(j_2)\oplus Im(\partial_2)=Im(j_2)\oplus Ker(i_1).
\end{array}$$
By the description of $i_1$ given above we see that
$Ker(i_1)=H_1(S^1)$, which is one-dimensional. Hence it suffices to find one non-zero preimage of it in $H_2(E)$. The Poincar\'e dual of the Euler class of $E$ is such: indeed, as the bundle $A\cap B$ is the trivial bundle, we have that the Euler class $e$ restricted to $A\cap B$ equals $\chi(F)[S^1]^*$, where $[S^1]^*$ denotes the Poincar\'e dual of $[S^1]$. The surface $F$ is hyperbolic, hence $\chi(F)\neq 0$. We compute:
$$[N]\cap[A\cap B]=\left\langle e, [A\cap B]\right\rangle=\chi(F)\left\langle [S^1]^*, [A\cap B]\right\rangle=\chi(F)[S^1]\neq 0.$$
So $\frac{1}{\chi(F)}[N]$ is a preimage in $H_2(E)$ of the generator $\sigma$ of $H_1(S^1)\subset H_1(A\cap B)$. In fact, by definition of $\partial_2$ we have 
$\partial_2([N])=\chi(F)[S^1]$.

As $Im(j_2)=(H_2(A)\oplus H_2(B))/Ker(j_2)=(H_2(A)\oplus H_2(B))/Im(i_2)$, we consider the map $i_2$.
$$\begin{array}{ccccc}
i_2: & H_2(A\cap B)& \rightarrow & H_2(A)\oplus H_2(B)&\\
	& \lambda [F]+\alpha\otimes [S^1]&\longmapsto & \left(\lambda [F]+\beta, -\lambda [F]\right),&\mbox{ where} 
\end{array}$$
$$\begin{array}{ccl}
\beta&=&\left[ \tilde{\alpha}\times I_1+{f_1}_*(\tilde{\alpha})\times I_2-{f_1^{-1}}_*{f_2}_*{f_1}_*(\tilde{\alpha})\times I_1-\left[f_2^{-1}, f_1^{-1}\right]_*(\tilde{\alpha})\times I_2+...\right.\\
& &+\prod_{i=1}^{g-1}\left[f_{2(g-i)}^{-1},f_{2(g-i)-1}^{-1}\right]_*(\tilde{\alpha})\times I_{2g-1}\\
	& &+{f_{2g-1}}_*\prod_{i=1}^{g-1}\left[f_{2(g-i)}^{-1},f_{2(g-i)-1}^{-1}\right]_*(\tilde{\alpha})\times I_{2g}\\
& &-{f_{2g-1}^{-1}}_*{f_{2g}}_*{f_{2g-1}}_*\prod_{i=1}^{g-1}\left[f_{2(g-i)}^{-1},f_{2(g-i)-1}^{-1}\right]_*(\tilde{\alpha})\times I_{2g-1}\\
& &-\prod_{i=1}^{g}\left[f_{2(g+1-i)}^{-1},f_{2(g+1-i)-1}^{-1}\right]_*(\tilde{\alpha})\times I_{2g}].
\end{array}$$

The element $\beta$ can be easily checked to belong to $H_2(A)$ by applying the corresponding holonomy maps on its summands. It is the image of $\alpha\otimes [S^1]$ with $S^1$ included in $\Sigma_A$ as the boundary curve $\delta$.

Hence in $(H_2(A)\oplus H_2(B))/Im(i_2)$ we have  
$$\left(\lambda[F]+\sum_{j}\left[\sum_i I_i\times\tilde{\alpha}_i\right]_j, \mu [F]\right)\sim \left((\lambda+\mu)[F] +\sum_{j}\left[\sum_i I_i\times\tilde{\alpha}_i\right]_j, 0\right).$$
So $\left(H_2(A)\oplus H_2(B)\right)/Im(i_2)\cong H_2(A)/i_2\left(H_1(F)\otimes H_1(S^1)\right)$.
This gives 
$$\begin{array}{ccl}
H_2(E)&=&\left\langle N\right\rangle\oplus H_2(F)\oplus\\
	& &\left\langle \left[\sum_{i=1}^{2g}I_i\times\tilde{\alpha}_i\right]\vline \alpha_i\in H_1(F), \sum_{i=1}^{2g}\alpha_i=\sum_{i=1}^{2g}{f_i}_*(\alpha_i)\right\rangle/i_2\left(H_1(F)\otimes H_1(S^1)\right).
	\end{array}$$
Finally let us compute $H_3(E)$:
$$\begin{array}{ccl}
H_3(E)&=&Im(j_3)\oplus H_3(E)/Im(j_3)=Im(j_3)\oplus H_3(E)/Ker(\partial _3)\\
	&\cong& Im(j_3)\oplus Im(\partial _3)=Im(j_3)\oplus Ker(i_2).
	\end{array}$$
As $H_4(E)\cong \mathbb{R}\cong H_3(A\cap B)$ and $\partial_3$ is injective, it is an isomorphism. Hence $i_3$ is the zero map and $j_3$ is injective. So $Im(j_3)\cong H_3(A)\oplus H_3(B)=H_3(A)=\mathbb{R}^{2g}$.
We then compute $Ker(i_2)$:
$$i_2\left(\lambda [F]+\alpha\otimes[S^1]\right)=0 \Longleftrightarrow \left(\lambda[F]+\beta, -\lambda [F]\right)=0,$$
with $\beta$ as before. Hence $\lambda=0$, and it remains $\beta=0$.
For this it is necessary that the terms lying over the same generator of $\pi_1(\Sigma_g)$ cancel each other. We sort the terms of the expression of $\beta$ by generator:
$$\begin{array}{lrcc}
I_1&\tilde{\alpha}-{f_1^{-1}}_*{f_2}_*{f_1}_*(\tilde{\alpha})&=&0\\
I_2 &{f_1}_*(\tilde{\alpha})-\left[f_2^{-1}, f_1^{-1}\right]_*(\tilde{\alpha})&=&0\\
\cdots & \cdots& &\cdots\\
I_{2g-1}& \prod_{i=1}^{g-1}\left[f_{2(g-i)}^{-1},f_{2(g-i)-1}^{-1}\right]_*(\tilde{\alpha})\\
	&-{f_{2g-1}^{-1}}_*{f_{2g}}_*{f_{2g-1}}_*\prod_{i=1}^{g-1}\left[f_{2(g-i)}^{-1},f_{2(g-i)-1}^{-1}\right]_*(\tilde{\alpha})&=&0\\
I_{2g}& {f_{2g-1}}_*\prod_{i=1}^{g-1}\left[f_{2(g-i)}^{-1},f_{2(g-i)-1}^{-1}\right]_*(\tilde{\alpha})\\
	&-\prod_{i=1}^{g}\left[f_{2(g+1-i)}^{-1},f_{2(g+1-i)-1}^{-1}\right]_*(\tilde{\alpha})&=&0
\end{array}$$
Using these equations, we obtain successively that for $\alpha\otimes [S^1]$ to belong to $Ker(i_2)$, the element $\alpha$ needs to be invariant under $f_i$ for all $1\leq i\leq 2g$. So 
$$Ker(i_2)=\left\langle\alpha\otimes [S^1]\mid \alpha\in H_1(F), {f_i}_*(\alpha)=\alpha \, \forall 1\leq i\leq 2g\right\rangle.$$
This corresponds in $H_3(E)$ to the $3$-manifolds in $E$ such that when cut along $A\cap B$, their boundary belongs to $Ker(i_2)$.
\end{proof}

The following lemma is a simple property of the surface of genus $2$, that is not true for surfaces of genus greater or equal to $3$. It opens a possibility for a better geometric adaptation of the homological information provided by the previous propositions for a bundle with genus $2$ fibre.
\begin{lemme}\label{genre 2 homotope homologue}
In the surface of genus $2$, two disjoint simple closed curves are freely homotopic if and only if they define the same homology class.
\end{lemme}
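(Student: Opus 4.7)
First, I would verify the forward direction, which is immediate: freely homotopic closed curves induce the same conjugacy class in $\pi_1$, hence the same class in $H_1$. For the converse, let $c_1, c_2 \subset \Sigma_2$ be disjoint essential simple closed curves with $[c_1] = [c_2]$ in $H_1(\Sigma_2; \mathbb{Z})$. My plan is to exhibit an embedded annulus cobounded by $c_1$ and $c_2$ in each case; by Baer's theorem (isotopy coincides with free homotopy for essential simple closed curves on an orientable surface), this will conclude the proof. I split into two cases depending on whether the common homology class vanishes.

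Suppose first that $[c_1] = [c_2] = 0$. Then both $c_1$ and $c_2$ are separating in $\Sigma_2$. Cutting $\Sigma_2$ along $c_1$ yields two copies of $\Sigma_{1,1}$; the curve $c_2$ lies in one of these pieces and is separating there as well. A short Euler characteristic count on the resulting subsurfaces shows that any essential separating simple closed curve in $\Sigma_{1,1}$ must be parallel to the boundary (the only way to split $\Sigma_{1,1}$ along a separating curve while ruling out disks is $\Sigma_{1,1} = \Sigma_{0,2} \cup \Sigma_{1,1}$, with the annulus containing $\partial \Sigma_{1,1}$). Hence $c_2$ and $c_1$ cobound an annulus.

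Suppose next that $[c_1] = [c_2] \neq 0$, so both $c_i$ are non-separating. The key step is to show that $c_1 \cup c_2$ separates $\Sigma_2$. If it did not, the complement $\Sigma_2 \setminus (c_1 \cup c_2)$ would be connected, allowing me to build a simple closed curve $\alpha$ disjoint from $c_2$ meeting $c_1$ transversally in a single point, by closing up a short transverse arc through a point of $c_1$ by an arc in the connected complement avoiding $c_2$. The intersection pairing then yields
$$\pm 1 = \langle [\alpha], [c_1] \rangle = \langle [\alpha], [c_2] \rangle = 0,$$
contradicting $[c_1] = [c_2]$. Once $c_1 \cup c_2$ is known to separate $\Sigma_2$ into two pieces $X_1, X_2$, each piece must carry exactly one copy of $c_1$ and one copy of $c_2$ on its boundary, else one of the curves $c_1, c_2$ would itself be separating in $\Sigma_2$. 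From $\chi(X_1)+\chi(X_2) = -2$ together with $\chi(X_i) = -2g_i$, one obtains $g_1 + g_2 = 1$, so one of the pieces is a genus-$0$ surface with two boundary components, that is, an annulus cobounded by $c_1$ and $c_2$.

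The main obstacle I expect is the construction of the dual curve $\alpha$ in the second case: the closing arc must be chosen simple and disjoint from $c_1 \cup c_2$, and the resulting closed curve must be simple and meet $c_1$ transversally in exactly one point. This is a careful but routine application of the connectedness of the complement combined with standard bigon-removal techniques on surfaces.
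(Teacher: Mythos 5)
Your proof is correct and follows essentially the same route as the paper: the heart of both arguments is the Euler characteristic count showing that one complementary component of $c_1\cup c_2$ is an annulus. The only real difference is that you justify (via the dual curve and the intersection pairing, plus a separate treatment of the null-homologous case) the step that the paper simply quotes as a known fact, namely that disjoint homologous simple closed curves cobound a subsurface with exactly those two boundary circles.
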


\begin{proof}
It is obvious that two homotopic simple closed curves in any surface define the same homology class. For the other direction, the disjoint simple closed curves $\alpha$ and $\beta$ are homologous if and only if $\alpha$ and $\beta$ form the boundary of a subsurface $S_1\subset \Sigma_2$. The surface $S_1$ of course has two boundary components, as has its complementary surface $S_2=\Sigma_2\setminus S_1$. The relation between the Euler characteristics of $\Sigma_2, S_1$ and $S_2$ yields the equation 
$$-2=\chi(\Sigma_2)=\chi(S_1)+\chi(S_2)=(2-2g_1-2)+(2-2g_2-2)=-2(g_1+g_2),$$
where $g_i\geq 0$ is the genus of $S_i, \, i=1, 2$. Hence $g_1+g_2=1$, and thus either $g_1$ or $g_2$ is $0$. Hence either $S_1$ or $S_2$ is the surface of genus $0$ with $2$ boundary components, which is an annulus. This provides the free homotopy between $\alpha$ and $\beta$.
\end{proof}

\begin{rem}\label{homologue pas homotope}
Our original hope was to find a non-trivial toroidal class in $H_2(E, \mathbb{R})$ among the classes of the form $\left[\sum_{i=1}^{2g}I_i\times\tilde{\alpha}_i\right]$ in order to show that $E$ is not negatively curved.

The existence of such a class requires the existence of a sequence of classes $[\alpha_{i_1}], ...,  [\alpha_{i_k}]$ in $H_1(F)$ such that ${f_{i_j}}_*([\alpha_{¡_j}])=[\alpha_{i_{j+1}}]$ for $1\leq j\leq k-1$ and ${f_{i_k}}_*([\alpha_{i_k}])=[\alpha_{i_1}]$. Indeed, such a sequence would produce a long cylinder in $E$ whose two ends would be two representatives of the class $[\alpha_{i_1}]$. If these two representatives are disjoint and the fibre is of genus $2$, then Lemma \ref{genre 2 homotope homologue} ensures that the cylinder can be closed back into a torus. But if they intersect, one needs to add some handles and the genus of the submanifold increases.

Note that the existence of $\left\{[\alpha_{i_1}], ...,  [\alpha_{i_k}]\right\}\subset H_1(F)$ with ${f_{i_j}}_*([\alpha_{¡_j}])=[\alpha_{i_{j+1}}]$ for $1\leq j\leq k-1$ and ${f_{i_k}}_*([\alpha_{i_k}])=[\alpha_{i_1}]$ as above is equivalent to the existence of an element $\phi={f_{i_k}}_*\cdots {f_{i_1}}_*$ with eigenvalue $1$ in $\rho(Im(f))$.

However, the existence of a fixed homology class, even in a surface of genus $2$, does not at all imply the existence of a fixed curve. Just take for $\phi$ a pseudo-Anosov element of $\mathrm{Mod}(F)$ lying in the Torelli subgroup.

\end{rem}

\bibliographystyle{alpha}

{\scshape Section de Math\'ematiques, Universit\'e de Gen\`eve,  2-4 rue du Li\`evre,  Case postale 64, 1211 Gen\`eve 4, Suisse,}
\emph{E-mail address: }{\ttfamily caterina.campagnolo@unige.ch}

\end{document}